\tikzset{cross/.style={cross out, draw=black, minimum size=2*(#1-\pgflinewidth), inner sep=0pt, outer sep=0pt},
cross/.default={1pt}}
\renewcommand{\phi}{\varphi}
\newcommand{\mB}{\mathrm{B}}
\newcommand{\R}{\mathbb{R}}
\def\ds1{\mathds{1}}
\renewcommand{\epsilon}{\varepsilon}
\newcommand{\argmin}{\mathop{\mathrm{argmin}}}
\newlength{\minipagewidth}
\newcommand{\beq}{\begin{equation}}
\newcommand{\eeq}{\end{equation}}
\newcommand{\beqa}{\begin{eqnarray}}
\newcommand{\eeqa}{\end{eqnarray}}
\newcommand{\beqan}{\begin{eqnarray*}}
\newcommand{\eeqan}{\end{eqnarray*}}
\def\ba#1\ea{\begin{align*}#1\end{align*}} 
\def\banum#1\eanum{\begin{align}#1\end{align}} 
\newcommand{\LS}{\mathtt{line\_search}}
\newtheorem{theorem}{Theorem}
\newtheorem{lemma}{Lemma}
\newcommand{\BlackBox}{\rule{1.5ex}{1.5ex}}  
\newenvironment{proof}{\par\noindent{\bf Proof\ }}{\hfill\BlackBox\\[2mm]}
\begin{document}

\title{A geometric alternative to Nesterov's accelerated gradient descent}

\author{S\'ebastien Bubeck\\
Microsoft Research\\
sebubeck@microsoft.com
\and
Yin Tat Lee\footnote{Most of this work were done while the author was at Microsoft Research, Redmond. The author was supported by NSF awards 0843915 and 1111109.}\\
MIT\\
yintat@mit.edu
\and
Mohit Singh\\
Microsoft Research\\
mohits@microsoft.com
}
\date{\today}

\maketitle

\begin{abstract}
We propose a new method for unconstrained optimization of a smooth and strongly convex function, which attains the optimal rate of convergence of Nesterov's accelerated gradient descent. The new algorithm has a simple geometric interpretation, loosely inspired by the ellipsoid method. We provide some numerical evidence that the new method can be superior to Nesterov's accelerated gradient descent. 
\end{abstract}

\section{Introduction}
Let $f : \R^n \rightarrow \R$ be a $\beta$-smooth and $\alpha$-strongly convex function. Thus, for any $x,y\in \R^n$, we have
\begin{equation*}\label{eq:smooth-strong}
 f(x) + \nabla f(x)^{\top} (y-x) + \frac{\alpha}{2} |y-x|^2 \leq  f(y) \leq f(x) + \nabla f(x)^{\top} (y-x) + \frac{\beta}{2} |y-x|^2.
\end{equation*}
 Let $\kappa = \beta/\alpha$ be its condition number.  It is a one line calculation to verify that a step of gradient descent on $f$ will decrease (multiplicatively) the squared distance to the optimum by $1-1/\kappa$. In this paper we propose a new method, which can be viewed as some combination of gradient descent and the ellipsoid method, for which the squared distance to the optimum decreases at a rate of $(1-1/\sqrt{\kappa})$
 (and each iteration requires one gradient evaluation and two line-searches). This matches the optimal rate of convergence among the class of first-order methods, \cite{Nes83, Nes04}.

\subsection{Related works}
Nesterov's acceleration (i.e., replacing $\kappa$ by $\sqrt{\kappa}$ in the convergence rate) has proven to be of fundamental importance both in theory and in practice, see e.g. \cite{Bub14} for references. However the intuition behind Nesterov's accelerated gradient descent is notoriously difficult to grasp, and this has led to a recent surge of interest in new interpretations of this algorithm, as well as the reasons behind the possibility of acceleration for smooth problems, see \cite{AO14, LRP14, SBC14, FB15}.

In this paper we propose a new method with a clear intuition and which achieves acceleration. Since the function is strongly convex, gradient at any point gives a ball, say $A$, containing the optimum solution. Using the fact that the function is smooth, one can get an improved bound on the radius of this ball. The algorithm also maintains a ball $B$ containing the optimal solution obtained via the information from previous iterations. A simple calculation then shows that the smallest ball enclosing the intersection of $A$ and $B$ already has a radius shrinking at the rate of $1-\frac{1}{\kappa}$.  To achieve the accelerated rate, we make the observation that the gradient information in this iteration can also be used to shrink the ball $B$ and therefore, the radius of the enclosing ball containing the intersection of $A$ and $B$ shrinks at a faster rate. 
We detail this intuition in Section \ref{sec:intuition}. The new optimal method is described and analyzed in Section \ref{sec:optimal}. We conclude with some experiments in Section \ref{sec:exp}. 
 

\subsection{Preliminaries}
We write $|\cdot|$ for the Euclidean norm in $\R^n$, and $\mB(x,r^2) := \{y \in \R^n : |y-x|^2 \leq r^2 \}$ (note that the second argument is the radius squared). We define the map $\LS : \R^n\times \R^n \rightarrow \R^n$ by
$$\LS(x,y) = \argmin_{t \in \R} f(x + t (y-x)) ,$$
and we denote
$$x^+ = x - \frac{1}{\beta} \nabla f(x), \ \text{and} \ x^{++} = x - \frac{1}{\alpha} \nabla f(x) . $$
Recall that by strong convexity one has
$$\forall y \in \R^n, f(y) \geq f(x) + \nabla f(x)^{\top} (y-x) + \frac{\alpha}{2} |y-x|^2,$$
which implies in particular:
$$x^* \in \mB\left(x^{++}, \frac{|\nabla f(x)|^2}{\alpha^2} - \frac{2}{\alpha} (f(x) - f(x^*)) \right) .$$
Furthermore recall that by smoothness one has $f(x^+) \leq f(x) - \frac{1}{2 \beta} |\nabla f(x)|^2$ which allows to \emph{shrink} the above ball by a factor of $1-\frac{1}{\kappa}$ and obtain the following:
\begin{equation} \label{eq:ball2}
x^* \in \mB\left(x^{++}, \frac{|\nabla f(x)|^2}{\alpha^2} \left(1 - \frac{1}{\kappa}\right) - \frac{2}{\alpha} (f(x^+) - f(x^*)) \right) 
\end{equation}

\section{Intuition} \label{sec:intuition}

\begin{figure}[ht]
\centering

\begin{minipage}[t]{.45\textwidth}

\begin{tikzpicture}[scale=0.7, every node/.style={transform shape}]

\draw  (0,0) ellipse (2 and 2);
\draw[dashed]  (4,0) ellipse (4 and 4);
\draw  (4,0) ellipse (3.85 and 3.85);
\draw [|-|] (4,0) -- (4,4) node[pos=0.5, right] {$|g|$};
\draw [|-|] (4,0) -- (7.85,0) node[pos=0.5, above] {$\sqrt{1-\epsilon}\ |g|$};

\begin{scope}
  \clip (0,0) ellipse (2 and 2);
  \fill[lightgray] (4,0) ellipse (3.85 and 3.85);
\end{scope}

\draw (0,0) node[cross=5pt] {};

\draw [|-|] (0,0) -- (-2,0) node[pos=0.4, above] {$1$};
\draw [|-|] (0.64719,0) -- (2.53958,0) node[pos=0.35, above] {$\sqrt{1-\epsilon}$};
\draw[thick]  (0.64719,0) ellipse (1.8924 and 1.8924);

\end{tikzpicture}

\caption{One ball shrinks.}
\label{fig:one_ball}

\end{minipage}
\hfill
\noindent
\begin{minipage}[t]{.45\textwidth}

\begin{tikzpicture}[scale=0.7, every node/.style={transform shape}]

\draw[dashed]  (0,0) ellipse (2 and 2);
\draw  (0,0) ellipse (1.68 and 1.68);
\draw[dashed]  (4,0) ellipse (4 and 4);
\draw  (4,0) ellipse (3.85 and 3.85);
\draw [|-|] (4,0) -- (7.85,0) node[pos=0.5, above] {$\sqrt{1-\epsilon}\ |g|$};

\begin{scope}
  \clip (0,0) ellipse (1.68 and 1.68);
  \fill[lightgray] (4,0) ellipse (3.85 and 3.85);
\end{scope}

\draw (0,0) node[cross=4pt] {};

\draw [|-|] (0,-3) -- (-1.68,-3) node[pos=0.5, above] {$\sqrt{1-\epsilon |g|^2}$};
\draw [|-|] (0.5,0) -- (2.1044,0) node[pos=0.3, above] {\scriptsize $\sqrt{1-\sqrt{\epsilon}}$};

\draw[thick]  (0.5,0) ellipse (1.6044 and 1.6044);

\end{tikzpicture}

\caption{Two balls shrink.}
\label{fig:two_ball}

\end{minipage}

\caption*{The left diagram shows the intersection shrinks at the same rate if only one of the ball shrinks; the right diagram shows the intersection shrinks much faster if two balls shrinks at the same absolute amount.}

\end{figure}

In Section \ref{sec:subopt} we describe a geometric alternative to gradient descent (with the same convergence rate) which gives the core of our new optimal method. Then in Section \ref{sec:why} we explain why one can expect to accelerate this geometric algorithm.

\subsection{A suboptimal algorithm} \label{sec:subopt}

Assume that we are given a guarantee $R_0 > 0$ on the distance from some point $x_0$ to the optimum, that is $x^* \in \mB(x_0, R_0^2)$. Combining this original enclosing ball for $x^*$ and the one obtained by \eqref{eq:ball2} (with $f(x^*) \leq f(x_0^+)$) one obtains
$$x^* \in \mB(x_0, R_0^2) \cap \mathrm{B}\left(x_0 - \frac{1}{\alpha} \nabla f(x_0), \frac{|\nabla f(x_0)|^2}{\alpha^2} \left(1 - \frac{1}{\kappa}\right) \right) .$$
If $\frac{|\nabla f(x_0)|^2 }{\alpha^2}\leq R_0^2(1-\frac{1}{\kappa})$ then the second ball already shrinks by a factor of $(1-\frac{1}{\kappa})$. In the other case when $\frac{|\nabla f(x_0)|^2 }{\alpha^2}> R_0^2(1-\frac{1}{\kappa})$, the center of the two balls are far apart and therefore there is a much smaller ball containing the intersection of two balls. Formally, it is an easy calculation to see that for any $g \in \R^n$, $\epsilon \in (0,1)$, there exists $x \in \R^n$ such that
$$\mB(0,1) \cap \mB(g, |g|^2 (1- \epsilon)) \subset \mB(x, 1-\epsilon) . \quad \quad \text{(Figure \ref{fig:one_ball})}$$
In particular the two above display implies that there exists $x_1 \in \R^n$ such that
$$x^* \in \mB\left( x_1 , R_0^2 \left(1 - \frac{1}{\kappa}\right) \right) .$$
Denote by $T$ the map from $x_0$ to $x_1$ defined implicitely above, and let $(x_k)$ be defined by $x_{k+1} = T(x_k)$. Then we just proved
$$|x^*-x_k|^2 \leq \left(1 - \frac{1}{\kappa}\right)^k R_0^2 .$$
In other words, after $2 \kappa \log(R_0 / \epsilon)$ iterations where each iteration cost one call to the gradient oracle) one obtains a point $\epsilon$-close to the minimizer of $f$.

\subsection{Why one can accelerate} \label{sec:why}

Assume now that we are give a guarantee $R_0 > 0$ such that $x^* \in \mB(x_0, R_0^2-\frac{2}{\alpha}(f(y)-f(x^*)))$ where $f(x_0)\leq f(y)$ (say by choosing $y=x_0$). Using the fact that $f(x_0^+)\leq f(x_0)-\frac{1}{2\beta} |\nabla f(x_0)|^2\leq f(y)-\frac{1}{2\alpha \kappa} |\nabla f(x_0)|^2\ $, we obtain that
\begin{align*}
x^* \in \mB\left(x_0, R_0^2-\frac{|\nabla f(x_0)|^2  }{\alpha^2 \kappa}- \frac{2}{\alpha}\left(f(x_0^+)-f(x^*)\right)\right)
\end{align*}
which, intuitively, allows us the shrink the radius squared from $R_0^2$ to $R_0^2-\frac{|\nabla f(x_0)|^2  }{\alpha^2 \kappa}$ using the local information at $x_0$.
From \eqref{eq:ball2}, we have
\begin{align*}
x^* \in \mB\left(x_0^{++}, \frac{|\nabla f(x_0)|^2}{\alpha^2} \left(1 - \frac{1}{\kappa}\right) - \frac{2}{\alpha} \left(f(x_0^+) - f(x^*)\right) \right) .
\end{align*}

Now, intersecting the above two shrunk balls and using Lemma~\ref{lem:geom} (see below and also see Figure~\ref{fig:two_ball}), we obtain that there is an $x_1'$ such that

 \begin{align*}
x^* \in \mB\left(x_1', R_0^2 \left(1-\frac{1}{\sqrt{\kappa}}\right)  - \frac{2}{\alpha} \left(f(x_0^+) - f(x^*)\right) \right)
\end{align*}
giving us an acceleration in shrinking of the radius. To carry the argument for the next iteration, we would have required that $f(x_1')\leq f(x_0^+)$ but it may not hold. Thus, we choose $x_1$ by a line search
$$x_1 = \LS \left(x_1', x_0^+ \right) $$
which ensures that $f(x_1)\leq f(x_0^+)$. To remedy the fact that the ball for the next iteration is centered at $x_1'$ and not $x_1$, we observe that the line search also ensures that $\nabla f(x_1)$ is perpendicular to the line going through $x_1$ and $x_1'$. This geometric fact is enough for the algorithm to work at the next iteration as well.
In the next section we describe precisely our proposed algorithm which is based on the above insights.

\section{An optimal algorithm} \label{sec:optimal}
Let $x_0 \in \R^n$, $c_0 = x_0^{++}$, and $R_0^2 = \left(1 - \frac{1}{\kappa}\right)\frac{|\nabla f(x_0)|^2}{\alpha^2}$. For any $k \geq 0$ let
$$x_{k+1} = \LS \left(c_{k}, x_k^+\right) ,$$
and $c_{k+1}$ (respectively $R^2_{k+1}$) be the center (respectively the squared radius) of the ball given by (the proof of) Lemma \ref{lem:geom} which contains
$$\mB\left(c_k, R_k^2 - \frac{|\nabla f(x_{k+1})|^2}{\alpha^2 \kappa}\right) \cap \mB\left(x_{k+1}^{++}, \frac{|\nabla f(x_{k+1})|^2}{\alpha^2} \left(1 - \frac{1}{\kappa}\right) \right).$$
The formulas for $c_{k+1}$ and $R^2_{k+1}$ are given in Algorithm \ref{algo:minball}.

\begin{theorem}\label{thm:main}
For any $k \geq 0$, one has $x^* \in \mB(c_k, R_k^2)$, $R_{k+1}^2 \leq \left(1 - \frac{1}{\sqrt{\kappa}}\right) R_k^2$, and thus
$$|x^* - c_k|^2 \leq \left(1 - \frac{1}{\sqrt{\kappa}}\right)^k R_0^2 .$$
\end{theorem}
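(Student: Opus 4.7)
The natural strategy is induction on $k$, but strengthened with an invariant that keeps track of the sub-optimality $f-f(x^\ast)$. Specifically, I would prove by induction the enhanced statement
\[
x^\ast \in \mB\!\left(c_k,\ R_k^2 - \tfrac{2}{\alpha}\bigl(f(x_k^+) - f(x^\ast)\bigr)\right).
\]
The first assertion of the theorem is then immediate from $f(x_k^+)\geq f(x^\ast)$; the extra sub-optimality term is what lets the inductive step close, and it mirrors the $f$-gap that appears throughout the intuition in Section~\ref{sec:intuition} and in \eqref{eq:ball2}. The base case $k=0$ is exactly \eqref{eq:ball2} applied at $x_0$, since $c_0 = x_0^{++}$ and $R_0^2 = (1-1/\kappa)\,|\nabla f(x_0)|^2/\alpha^2$.

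For the inductive step, the line search $x_{k+1}=\LS(c_k,x_k^+)$ provides two ingredients: (i) since $x_k^+$ lies on the search line, $f(x_{k+1})\leq f(x_k^+)$; and (ii) by first-order optimality along the line, $\nabla f(x_{k+1})$ is orthogonal to $c_k-x_{k+1}$. Combining (i) with smoothness gives $f(x_{k+1}^+) \leq f(x_k^+) - |\nabla f(x_{k+1})|^2/(2\beta)$; substituting into the inductive hypothesis and using $1/(\alpha\beta)=1/(\alpha^2\kappa)$ yields
\[
x^\ast \in \mB\!\left(c_k,\ R_k^2 - \tfrac{|\nabla f(x_{k+1})|^2}{\alpha^2\kappa} - \tfrac{2}{\alpha}\bigl(f(x_{k+1}^+)-f(x^\ast)\bigr)\right),
\]
which is precisely the first of the two balls intersected in the algorithm, with a uniform sub-optimality slack subtracted. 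The second ball is obtained by applying \eqref{eq:ball2} directly at $x_{k+1}$, which produces the same slack. Apply Lemma~\ref{lem:geom} to these two shaved balls. The crucial property I expect from (the proof of) that lemma is translation-invariance of its formulas: subtracting the same constant from both input squared radii leaves the output center unchanged and subtracts the same constant from the output squared radius. Granting this, the construction produces the algorithm's $c_{k+1}$ and an output squared radius of $R_{k+1}^2 - \tfrac{2}{\alpha}(f(x_{k+1}^+)-f(x^\ast))$, which closes the induction.

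For the contraction $R_{k+1}^2 \leq (1-1/\sqrt{\kappa})R_k^2$, which is a purely geometric claim independent of $x^\ast$, I would use (ii) together with $x_{k+1}^{++}=x_{k+1}-\nabla f(x_{k+1})/\alpha$ and Pythagoras:
\[
|c_k - x_{k+1}^{++}|^2 = |c_k - x_{k+1}|^2 + \tfrac{|\nabla f(x_{k+1})|^2}{\alpha^2} \ \geq\ \tfrac{|\nabla f(x_{k+1})|^2}{\alpha^2}.
\]
Plug this lower bound on the center-distance, together with the two input squared radii $R_k^2 - |\nabla f(x_{k+1})|^2/(\alpha^2\kappa)$ and $(|\nabla f(x_{k+1})|^2/\alpha^2)(1-1/\kappa)$, into the closed-form formula of Lemma~\ref{lem:geom}. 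After the rescaling $\rho := |\nabla f(x_{k+1})|/(\alpha R_k)$, the bound reduces to a one-variable inequality in $\rho$ and $\kappa$, to be verified by an elementary calculation (and possibly a short case split according to whether the two balls are in ``disjoint-centers'' position or one contains the other, as in the two regimes discussed around Figure~\ref{fig:one_ball} and Figure~\ref{fig:two_ball}). The geometric picture in Figure~\ref{fig:two_ball} already explains why the bound comes out to $1-1/\sqrt{\kappa}$ rather than $1-1/\kappa$: when both balls shrink by the same absolute amount, the enclosing-ball radius shrinks by an amount of order the square root of that, which is exactly the source of acceleration. The final claim then follows by iterating.

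The main obstacle I anticipate is the final algebraic verification of the $(1-1/\sqrt{\kappa})$ factor from Lemma~\ref{lem:geom}'s formula, together with cleanly establishing the translation-invariance property used in the inductive step; everything else is bookkeeping on the invariant.
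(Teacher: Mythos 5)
Your proposal is correct and follows essentially the same route as the paper: the same strengthened inductive invariant $x^\ast \in \mB\bigl(c_k, R_k^2 - \tfrac{2}{\alpha}(f(x_k^+)-f(x^\ast))\bigr)$, the same use of the line search to get both $f(x_{k+1})\leq f(x_k^+)$ and the orthogonality yielding $|x_{k+1}^{++}-c_k|\geq |\nabla f(x_{k+1})|/\alpha$, and the same appeal to Lemma~\ref{lem:geom}. The ``translation-invariance'' you anticipate needing is exactly what the uniform $\delta$ in the statement of Lemma~\ref{lem:geom} provides, so the only remaining algebra is already contained in that lemma's proof.
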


\begin{proof} 
We will prove a stronger claim by induction that for each $k\geq 0$, one has
$$x^* \in \mB\left(c_k, R_k^2 - \frac{2}{\alpha} \left(f(x_k^+) - f(x^*)\right)\right) .$$
The case $k=0$ follows immediately by \eqref{eq:ball2}. Let us assume that the above display is true for some $k \geq 0$. Then using $f(x^*) \leq f(x_{k+1}^+) \leq f(x_{k+1}) - \frac{1}{2\beta} |\nabla f(x_{k+1})|^2 \leq f(x_k^+) - \frac{1}{2\beta} |\nabla f(x_{k+1})|^2 ,$
one gets
$$x^* \in \mB\left(c_k, R_k^2 - \frac{|\nabla f(x_{k+1})|^2}{\alpha^2 \kappa} - \frac{2}{\alpha} \left(f(x_{k+1}^+) - f(x^*)\right) \right) .$$
Furthermore by \eqref{eq:ball2} one also has
$$\mB\left(x_{k+1}^{++}, \frac{|\nabla f(x_{k+1})|^2}{\alpha^2} \left(1 - \frac{1}{\kappa}\right) - \frac{2}{\alpha} \left(f(x_{k+1}^+) - f(x^*)\right) \right).$$
Thus it only remains to observe that the squared radius of the ball given by Lemma \ref{lem:geom} which encloses the intersection of the two above balls is smaller than $\left(1 - \frac{1}{\sqrt{\kappa}}\right) R_k^2 - \frac{2}{\alpha} (f(x_{k+1}^+) - f(x^*))$.
We apply Lemma~\ref{lem:geom} after moving $c_k$ to the origin and scaling distances by $R_k$. We set $\epsilon =\frac{1}{\kappa}$, $g=\frac{|\nabla f(x_{k+1})|}{\alpha}$, $\delta=\frac{2}{\alpha}\left(f(x_{k+1}^+)-f(x^*)\right)$ and $a={x_{k+1}^{++}-c_k}$.  The line search step of the algorithm implies that $\nabla f(x_{k+1})^{\top} (x_{k+1} - c_k) = 0$ and therefore, $|a|=|x_{k+1}^{++} - c_k| \geq |\nabla f(x_{k+1})|/\alpha=g$ and Lemma~\ref{lem:geom} applies to give the result.
\end{proof}

\begin{lemma} \label{lem:geom}
Let $a \in \R^n$ and $\epsilon \in (0,1), g \in \R_+$. Assume that $|a| \geq g$. Then there exists $c \in \R^n$ such that for any $\delta >0$,
$$\mB(0,1 - \epsilon g^2 - \delta) \cap \mB(a, g^2(1-\epsilon) - \delta) \subset \mB\left(c, 1 - \sqrt{\epsilon} - \delta \right) .$$
\end{lemma}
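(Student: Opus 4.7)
My plan is to look for the center $c$ on the segment between the two ball centers, i.e., $c = \lambda a$ for some $\lambda \in [0,1]$ to be chosen, and bound $|x-c|^2$ for any $x$ in the intersection using the two ball constraints as a convex combination.

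The key identity I will use is the standard parallelogram-type expansion
\begin{equation*}
|x - \lambda a|^2 = (1-\lambda)|x|^2 + \lambda |x-a|^2 - \lambda(1-\lambda)|a|^2,
\end{equation*}
which one checks by expanding both sides. For $x$ in the intersection we have $|x|^2 \leq 1 - \epsilon g^2 - \delta$ from the first ball and $|x-a|^2 \leq g^2(1-\epsilon) - \delta$ from the second, so plugging these in and simplifying gives
\begin{equation*}
|x - \lambda a|^2 \leq 1 - \delta - \epsilon g^2 - \lambda(1 - g^2) - \lambda(1-\lambda)|a|^2.
\end{equation*}

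The next step is to pick $\lambda$ so that the right-hand side is at most $1 - \sqrt{\epsilon} - \delta$. Equivalently, I need $\epsilon g^2 + \lambda(1-g^2) + \lambda(1-\lambda)|a|^2 \geq \sqrt{\epsilon}$. The natural guess is $\lambda = \sqrt{\epsilon}$: substituting this, the required inequality collapses to $\sqrt{\epsilon}(1-\sqrt{\epsilon})(|a|^2 - g^2) \geq 0$, which holds by the hypothesis $|a| \geq g$ together with $\epsilon \in (0,1)$. Thus taking $c = \sqrt{\epsilon}\, a$ works.

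The whole argument is a direct calculation and there is no real obstacle; the only non-routine step is guessing the correct choice $\lambda = \sqrt{\epsilon}$, and this is essentially forced by the observation that one wants the extra term $\lambda(1-g^2) + \lambda(1-\lambda)|a|^2$ to exactly cancel $\sqrt{\epsilon} - \epsilon g^2$ in the worst case $|a| = g$ (where both ball-constraints are tight along the segment connecting $0$ and $a$), which yields the quadratic equation $\lambda^2 = \epsilon$.
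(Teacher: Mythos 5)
Your proof is correct, and it takes a genuinely different route from the paper's. You use the algebraic identity $|x-\lambda a|^2=(1-\lambda)|x|^2+\lambda|x-a|^2-\lambda(1-\lambda)|a|^2$, plug in the two ball constraints (valid since $\lambda=\sqrt{\epsilon}\in(0,1)$ keeps both coefficients nonnegative), and observe that the residual inequality reduces to $\sqrt{\epsilon}(1-\sqrt{\epsilon})(|a|^2-g^2)\geq 0$, which is exactly the hypothesis $|a|\geq g$; I checked each of these computations and they are right. The paper instead argues geometrically: it splits into the cases $g^2\leq 1/2$ (where $c=a$ works because $\tfrac12(1-\epsilon)\leq 1-\sqrt{\epsilon}$) and $g^2>1/2$, and in the latter case reduces to $n=2$, places $c$ at the intersection of the segment $[0,a]$ with the chord through the two circles' intersection points (so $c=\lambda a$ with $\lambda=\frac{1+|a|^2-g^2}{2|a|^2}$, the radical-axis point), and verifies a two-variable inequality. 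Your argument is more uniform and self-contained: no case split, no reduction to the plane, no appeal to a picture for the containment claim, and the final inequality is a one-line identity rather than something left as ``straightforward to verify.'' What the paper's approach buys is that its $c$ is the center of the \emph{minimum} enclosing ball of the intersection, which is the ball Algorithm \ref{algo:minball} actually computes; your $c=\sqrt{\epsilon}\,a$ is generally a different, non-minimal choice, but since the lemma only asserts existence of some enclosing ball of squared radius $1-\sqrt{\epsilon}-\delta$, and the algorithm's minimum enclosing ball can only have smaller squared radius, your proof fully supports the way the lemma is used in Theorem \ref{thm:main}.
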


\begin{proof}
First observe that if $g^2 \leq 1/2$ then one can take $c=a$ since $\frac{1}{2} (1- \epsilon) \leq 1 - \sqrt{\epsilon}$. Thus we assume now that $g^2 > 1/2$, and note that we can also clearly assume that $n=2$. Consider the segment joining the two points at the intersection of the two balls under consideration. We denote $c$ for the point at the intersection of this segment and $[0,a]$, and $x = |c|$ (that is $c = x \frac{a}{|a|}$). A simple picture reveals that $x$ satisfies
$$1 - \epsilon g^2 - \delta - x^2 = g^2(1-\epsilon) - \delta - (|a|-x)^2 \;\; \Leftrightarrow \;\; x = \frac{1+|a|^2 - g^2}{2 |a|} .$$

When $x \leq |a|$, neither of the balls covers more than half of the other ball and hence the intersection of the two balls is contained in the ball $\mB\left(x \frac{a}{|a|}, 1-\epsilon g^2 - \delta - x^2\right)$ (See figure \ref{fig:two_ball}). Thus it only remains to show that $x \leq |a|$ and that $1-\epsilon g^2 - \delta - x^2 \leq 1 - \sqrt{\epsilon} - \delta$. The first inequality is equivalent to $|a|^2 + g^2 \geq 1$ which follows from $|a|^2 \geq g^2 \geq 1/2$. The second inequality to prove can be written as
$$\epsilon g^2 +  \frac{(1+|a|^2 - g^2)^2}{4 |a|^2} \geq \sqrt{\epsilon} ,$$
which is straightforward to verify (recall that $|a|^2 \geq g^2 \geq 1/2$).
\end{proof}

Algorithm \ref{algo:main} we give is more agressive than Theorem \ref{thm:main}, for instance, using line search instead of fixed step size. The correctness of this version follows from a similar proof as Theorem \ref{thm:main}.

This algorithm does not require the smoothness parameter and the number of iterations; and it guarantees the function value is strictly decreasing. They are useful properties for machine learning applications because the only required parameter $\alpha$ is usually given. Furthermore, we believe that the integration of zeroth and first order information about the function makes our new method particularly well-suited in practice.

\begin{figure}
\centering
\begin{algorithm2e}[H]
\caption{Minimum Enclosing Ball of the Intersection to Two Balls}
\label{algo:minball}
\SetAlgoLined
\textbf{Input: }a ball centered at $x_{A}$ with radius $R_{A}$ and
a ball centered at $x_{B}$ with radius $R_{B}$.

\uIf{$\left|x_{A}-x_{B}\right|^{2}\geq\left|R_{A}^{2}-R_{B}^{2}\right|$}{
$c=\frac{1}{2}\left(x_{A}+x_{B}\right)-\frac{R_{A}^{2}-R_{B}^{2}}{2\left|x_{A}-x_{B}\right|^{2}}\left(x_{A}-X_{B}\right)$.
$R^{2}=R_{B}^{2}-\frac{\left(\left|x_{A}-x_{B}\right|^{2}+R_{B}^{2}-R_{A}^{2}\right)^{2}}{4\left|x_{A}-x_{B}\right|^{2}}$.
}\uElseIf{$\left|x_{A}-x_{B}\right|^{2}<R_{A}^{2}-R_{B}^{2}$}{
$c=x_{B}$.
$R=R_{B}$.
}\Else{
$c=x_{A}$.
$R=R_{A}$.\footnote{If we assume $|x_{A}-x_{B}| \geq R_B$ as in Lemma  \ref{lem:geom}, this extra case does not exist.}
}
\textbf{Output:} a ball centered at $c$ with radius $R$.
\end{algorithm2e}
\end{figure}

\begin{figure}
\centering
\begin{algorithm2e}[H]
\caption{Geometric Descent Method (GeoD)}
\label{algo:main}
\SetAlgoLined
\textbf{Input: }parameters $\alpha$ and initial points $x_{0}$.

$x_{0}^{+}=\LS(x_{0},x_{0}-\nabla f(x_{0}))$.

$c_{0}=x_{0}-\alpha^{-1}\nabla f(x_{0})$.

$R_{0}^{2}=\frac{\left|\nabla f(x_{0})\right|^{2}}{\alpha^{2}}-\frac{2}{\alpha}\left(f(x_{0})-f(x_{0}^{+})\right)$.

\For{$i\leftarrow 1, 2, \cdots$}{

\textbf{Combining Step:}

$x_{k}=\LS(x_{k-1}^{+},c_{k-1})$.

\textbf{Gradient Step:}

$x_{k}^{+}=\LS(x_{k},x_{k}-\nabla f(x_{k}))$.

\textbf{Ellipsoid Step: }

$x_{A}=x_{k}-\alpha^{-1}\nabla f(x_{k})$. $R_{A}^{2}=\frac{\left|\nabla f(x_{k})\right|^{2}}{\alpha^{2}}-\frac{2}{\alpha}\left(f(x_{k})-f(x_{k}^{+})\right)$.

$x_{B}=c_{k-1}$. $R_{B}^{2}=R_{k-1}^{2}-\frac{2}{\alpha}\left(f(x_{k-1}^{+})-f(x_{k}^{+})\right)$.

Let $B(c_{k},R_{k}^{2})$ is the minimum
enclosing ball of $B(x_{A},R_{A}^{2})\cap B(x_{B},R_{B}^{2})$.
}
\textbf{Output:} $x_{T}$.
\end{algorithm2e}
\end{figure}


\section{Experiments} \label{sec:exp}

In this section, we compare Geometric Descent method (GeoD) with a variety of full gradient methods. It includes steepest descent (SD), accelerated full gradient method (AFG), accelerated full gradient method with adaptive restart (AFGwR) and quasi-Newton with limited-memory BFGS updating (L-BFGS). For SD, we compute the gradient and perform an exact line search on the gradient direction. For AFG, we use the `Constant Step Scheme II' in  \cite{Nes04}. For AFGwR, \cite{o2013adaptive}, we use the function restart scheme and replace the gradient step by an exact line search to improve its performance. For both AFG and AFGwR, the parameter is chosen among all powers of 2 for each dataset individually. For L-BFGS, we use the software developed by Mark Schmidt with default settings (see \cite{schmidt2012minfunc}).

In all experiments, the minimization problem is of the form $\sum_i \phi (a_i^T x)$ where computing $a_i^T x$ is the computational bottleneck. Therefore, if we reuse the calculations carefully, each iteration of all mentioned methods requires only one calculation of $a_i^T x$ for some $x$. In particular, the cost of exact line searches is negligible compares with the cost of computing $a_i^T x$. Hence, we simply report the number of iterations in the following experiments.

\subsection{Binary Classification}

\begin{figure}
\centering
\begin{minipage}[t]{.49\textwidth}
\centering
  \includegraphics[trim=10 0 10 0, clip,width=\linewidth]{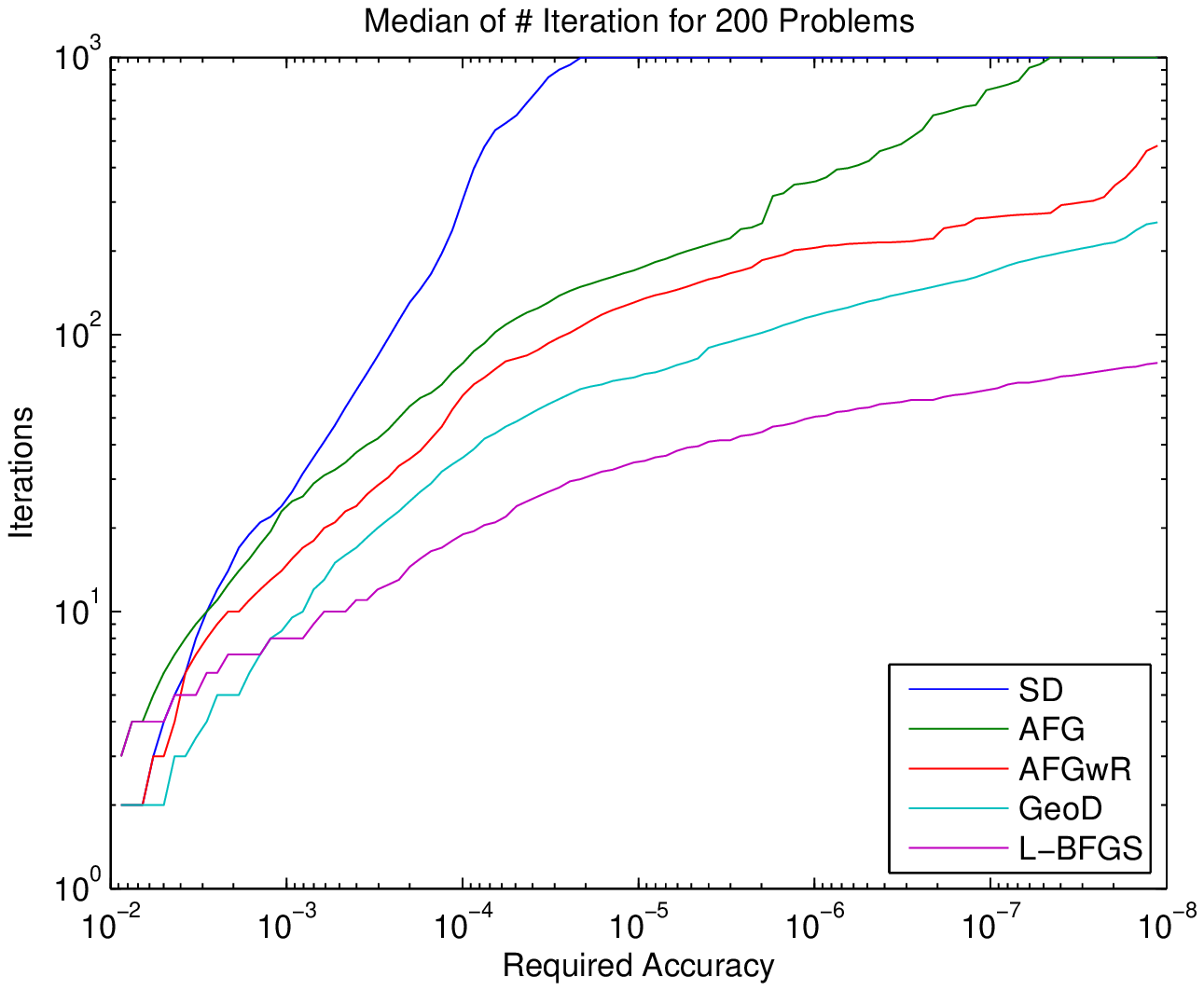}
\end{minipage}
\hfill
\noindent
\begin{minipage}[t]{.49\textwidth}
\centering
  \includegraphics[trim=10 0 10 0, clip,width=\linewidth]{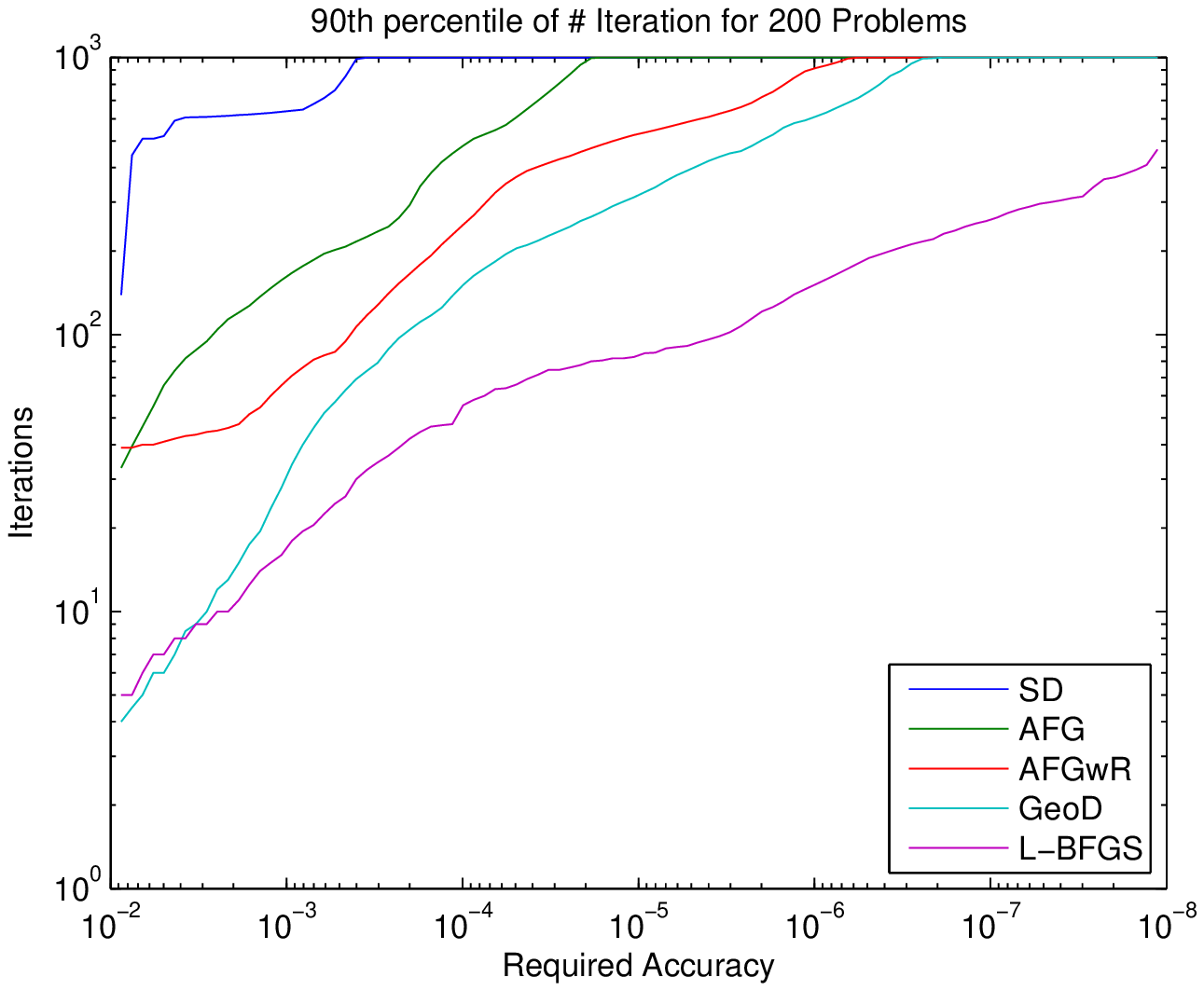}
\end{minipage}
\caption{Comparison of full gradient methods on 40 datasets and 5 regularization coefficients with smoothed hinge loss function. The left diagram shows the median of the number of iterations required to achieve a certain accuracy and the right diagram shows the 90th  percentile.}
\label{fig:binary_class}

\end{figure}

We evaluate the algorithms via the binary classification problem on the 40 datasets\footnote{We omitted all datasets of size $\geq$ 100 MB for time consideration.} from LIBSVM data, \cite{chang2011libsvm}. The problem is to minimize the regularized empirical risk:
$$f(x) = \frac{1}{n} \sum_{i=1}^n \phi(b_i \  a_i^T x) + \frac{\lambda}{2} \left| x \right|^2$$
where $a_i\in \mathbb{R}^d$, $b_i\in \mathbb{R}$ are given by the datasets, $\lambda$ is the regularization coefficient and $\phi$ is the smoothed hinge loss function given by
$$\phi(z)=\begin{cases}
0 & \text{if }z\geq1\\
\frac{1}{2}-z & \text{if }z\leq0\\
\frac{1}{2}\left(1-z\right)^{2} & \text{otherwise.}
\end{cases}
$$

We solve this problem with different regularization coefficients $\lambda \in \{10^{-4}, 10^{-5},10^{-6},10^{-7},10^{-8}\}$ and report the median and 90th percentile of the number of steps required to achieve a certain accuracy. In figure \ref{fig:binary_class}, we see that GeoD is better than SD, AFG and AFGwR, but worse than L-BFGS. Since L-BFGS stores and uses the gradients of the previous iterations, it is interesting to see if GeoD will be competitive to L-BFGS if it computes the intersection of multiple balls instead of 2 balls.

\subsection{Worst Case Experiment}
In this section, we consider the minimization problem
\begin{equation}\label{eq:worst_func}
f(x)= \frac{\beta}{2} \left((1-x_1)^2 + \sum_{i=1}^{n-1} (x_i - x_{i+1})^2 + x_{n}^2 \right) + \frac{1}{2} \sum_{i=1}^n x_i^2
\end{equation}
where $\beta$ is the smothness parameter. Within the first $n$ iterations, it is known that any iterative methods uses only the gradient information cannot minimize this function faster than the rate $1-\Theta({\beta}^{-1/2})$.

In figure \ref{fig:worst}, we see that every method except SD converge in the same rate with different constants for the first $n$ iterations. However, after $\Theta(n)$ iterations, both SD and AFG continue to converge in the rate the theory predicted while other methods converge much faster. We remark that the memory size of L-BFGS we are using is 100 and if in the right example we choose $n=100$ instead of $200$, L-BFGS will converge at $n=100$ immediately. It is surprising that the AFGwR and GeoD can achieve a comparable result by using ``memory size" being 1.

\begin{figure}[h!]
  \centering
  \includegraphics[trim=50 0 50 0, clip,width=\linewidth]{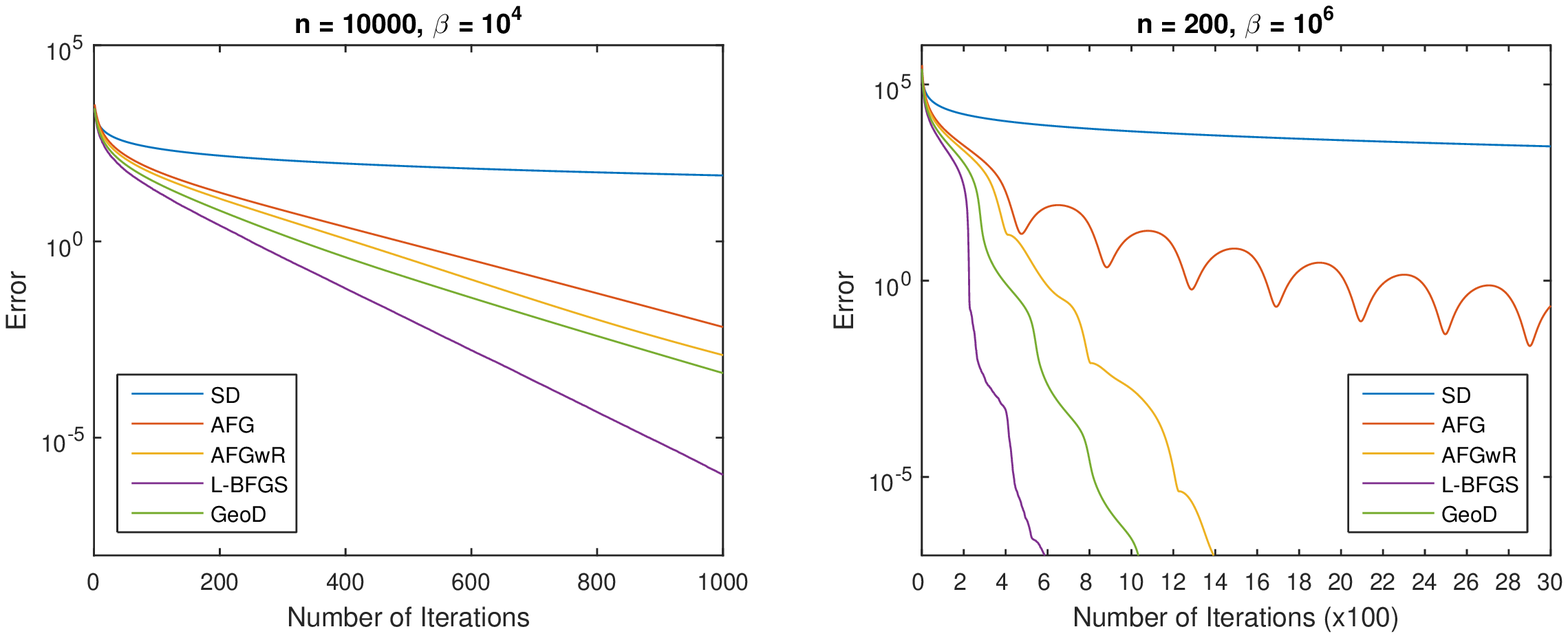}
  \caption{Comparision of full gradient methods for the function \eqref{eq:worst_func}}\label{fig:worst}
\end{figure}

\bibliographystyle{plainnat}
\bibliography{newbib}
\end{document}